\documentclass[a4paper]{article}
\usepackage{amsmath,amsthm,amssymb}
\usepackage{graphicx}
\usepackage{multirow}
\usepackage{dcolumn}
\newtheorem{Def}{Definition}
\newtheorem{theorem}[Def]{Theorem}
\newtheorem*{maintheorema}{Main Theorem A}
\newtheorem*{maintheoremb}{Main Theorem B}
\newtheorem{lemma}[Def]{Lemma}
\newtheorem{prop}[Def]{Proposition}
\newtheorem*{corollary}{Corollary C}
\newtheorem{remark}[Def]{Remark}
\newtheorem{claim}[Def]{Claim}
\makeatletter
 
\renewcommand{\@cite}[2]{{#1\if@tempswa , #2\fi}}
\makeatother

\title{On roots of Dehn twists.}
\author{Naoyuki Monden}
\date{}
\begin{document}
\maketitle

\begin{abstract}
Margalit and Schleimer constructed nontrivial roots of the Dehn twist about a nonseparating curve.
We prove that the conjugacy classes of roots of the Dehn twist about a nonseparating curve correspond to the conjugacy classes of periodic maps with certain conditions.
Furthermore, we give data set which determine the conjugacy class of a root.
As a consequence, we can find the minimum degree and the maximum degree, and show that the degree must be odd.
Also, we give Dehn twist expression of the root of degree 3.
\end{abstract}

\section{Introduction}
Let $\Sigma_{g,b}$ denote a compact, oriented surface of genus $g$ with $b \ (=0,2)$ boundary components, and let ${\cal M}_{g,b}$ denote its mapping class group, the group of isotopy class of orientation-preserving homeomorphisms which are allowed to permute the points on the boundary.
The isotopies are also required to permute the points on the boundary.
If $b=0$, we omit it from the notation.
Let $t_{C}$ be the (left handed) Dehn twist about a simple closed curve $C$ on $\Sigma_{g+1}$ $(g+1\geq 2)$.

It is a natural question whether $t_{C}$ has a root.
In other words, does there exist $h\in {\cal M}_{g+1}$ such that for integer degree $n>1$ $t_{C}=h^{n}$?
If $C$ is a separating curve, it is easy to find roots.
For example, the half twist about $C$ is a root of degree $2$.
However, when $C$ is a nonseparating curve, roots were not discovered until recently.
In 2009, Margalit and Schleimer [MS] discovered roots of the Dehn twist $t_{C}$ about a nonseparating curve $C$ of degree $2g+1$.

Matsumoto and Montesinos [MM1] introduced a certain type of mapping class of $\Sigma_{g+1}$ which were called by them pseudo-periodic maps of  negative twist.
Furthermore, they gave the complete set of conjugacy invariant for a pseudo-periodic map of negative twist.
In Section 2 we introduce the definition and the complete set of conjugacy invariant of pseudo-periodic maps of  negative twist .
In Section 3 we show that inverses of roots of Dehn twists are pseudo-periodic maps of negative twist, and prove Main Theorem A by using the work of [MM1].
\begin{maintheorema}
Let $C$ be a nonseparating curve.
Let $\partial_{\nu}\Sigma_{g,2}$ $(\nu=0,1)$ be the boundary of $\Sigma_{g,2}$, which we write $\overrightarrow{\partial_{\nu}\Sigma_{g,2}}$ to emphasize its orientation.

The conjugacy classes $(\in {\cal M}_{g+1})$ of roots of $t_{C}$ of degree $n$ correspond to the conjugacy classes $(\in {\cal M}_{g,2})$ of periodic maps of order $n$ which satisfy following condition:

The action on $\overrightarrow{\partial_{\nu}\Sigma_{g,2}}$ $(\nu=0,1)$ is the rotation angle of $2\pi \delta^{\nu}/n$ such that $\delta^{0}+\delta^{1}=n-1$
$(Here \ \gcd(\delta^{\nu},n)=1, 1\leq \delta^{\nu}\leq n-2 \ (\nu=0,1))$.
\end{maintheorema}
Moreover, by combining the Riemann-Hurwitz formula, Nielsen's theorem [Ni] and Main Theorem A we give the following data set which determines the conjugacy class of a root.
\begin{maintheoremb}\label{valency}
The conjugacy class $(\in {\cal M}_{g+1})$ of a root $h$ of $t_{C}$ of degree $n$ is determined by the data set $[n,g^{\prime},(\sigma^{0},\sigma^{1});(\sigma_{1},\lambda_{1}),\ldots,(\sigma_{k},\lambda_{k})]$ which satisfies the following conditions:
\begin{description}
\item[(1)] $2g/n=2g^{\prime}+\sum_{j=1}^{k}(1-1/\lambda_{i})$,
\item[(2)] $\sum_{j=1}^{k}\sigma_{j}n/\lambda_{j}+\sigma^{0}+\sigma^{1}\equiv 0(\pmod{n})$,
\item[(3)] $\sigma^{0}+\sigma^{1}+\sigma^{0}\sigma^{1}\equiv 0(\pmod{n})$.
\end{description}
\end{maintheoremb}
As a corollary of Theorem~\ref{valency} we obtain the following result.
\begin{corollary}
the range of degree $n$ is $3\leq n\leq 2g+1$ and $n$ is odd.
\end{corollary}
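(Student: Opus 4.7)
The plan is to extract the three claims---that $n$ is odd, that $n\ge 3$, and that $n\le 2g+1$---from the arithmetic conditions provided by Main Theorems A and B, applied to an arbitrary root $h$ of degree $n$.

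First, for oddness, I would argue directly from Main Theorem A. The boundary rotation numbers satisfy $\delta^0+\delta^1=n-1$ with $\gcd(\delta^\nu,n)=1$. If $n$ were even, then $n-1$ would be odd, forcing the two $\delta^\nu$ to have opposite parities; the even one would then share the prime $2$ with $n$, violating coprimality. Hence $n$ is odd, and since a nontrivial root has $n>1$, we obtain $n\ge 3$ for free.

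For the upper bound $n\le 2g+1$, I would split into cases on the data $(g',k)$ from Main Theorem B. If $g'\ge 1$, condition (1) gives $2g/n\ge 2$, so $n\le g$. If $g'=0$ and $k\ge 2$, the estimate $\sum(1-1/\lambda_j)\ge k/2\ge 1$ gives $n\le 2g$. The case $g'=k=0$ is excluded by (1) since $g\ge 1$. The delicate case is $g'=0$ and $k=1$: here (1) becomes $n=2g\lambda_1/(\lambda_1-1)$, and $\lambda_1\mid n$ forces $d:=\lambda_1-1$ to divide $2g$. Setting $m:=2g/d\ge 1$, we have $n=m(d+1)$ and, crucially, $n/\lambda_1=m$.

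In this last case I would exploit conditions (2) and (3) modulo $m$. Condition (2) reduces to $\sigma^0+\sigma^1\equiv 0\pmod m$, and subtracting this from condition (3) modulo $m$ yields $\sigma^0\sigma^1\equiv 0\pmod m$. The parameters $\sigma^\nu$ are multiplicative inverses of $\delta^\nu$ modulo $n$, as one verifies by multiplying $\delta^0+\delta^1\equiv -1\pmod n$ by $\sigma^0\sigma^1$ and recovering condition (3); in particular $\gcd(\sigma^\nu,n)=1$, so $\gcd(\sigma^\nu,m)=1$ because $m\mid n$. Combined with $m\mid\sigma^0\sigma^1$ this forces $m=1$, hence $d=2g$ and $n=\lambda_1=2g+1$. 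The main obstacle is precisely this case: one must isolate the right modular reduction of (2) and (3) to produce the divisibility $m\mid\sigma^0\sigma^1$, and recognize the boundary parameters $\sigma^\nu$ as units modulo $n$; once these two observations are secured, the divisibility argument closing the bound is short.
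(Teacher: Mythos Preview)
Your argument is correct and follows essentially the same route as the paper: both reduce to the arithmetic conditions of Main Theorems A/B, and in the critical case $g'=0$, $k=1$ both derive from conditions (2) and (3) that $m_1=n/\lambda_1$ divides $\sigma^0\sigma^1$ while being coprime to each $\sigma^\nu$, forcing $m_1=1$ and $n=2g+1$. Two small differences are worth noting: your parity argument uses the relation $\delta^0+\delta^1=n-1$ from Main Theorem~A directly (and handles all even $n$ at once), whereas the paper argues via condition~(3) in the $\sigma$-variables and, as written, only spells out the case $n=2$; and the paper additionally exhibits data realizing a degree-$3$ root (and cites Margalit--Schleimer for degree $2g+1$), thereby showing the bounds are sharp, which your proposal does not address---though the corollary as stated only asks for the necessary inequalities.
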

Thus the roots constructed by Margalit and Schleimer have the maximum degree $2g+1$.

In Section 4 we give Dehn twist expression of the root of degree 3.
In Section 5 we consider a root of image of the Dehn twist about a nonseparating curve to ${\rm Sp}(2(g+1),\mathbb{Z})$.
We show that the degree must be odd.
In Section 6 we consider a root of the Dehn twist about a nonseparating curve in the mapping class group of a surface with boundary components and punctures.

Recently, McCullough and Rajeevsarathy [MR] has independently proved the result similar to Main Theorem A, B and Corollary C.
They gave the data set similar to Theorem B in Theorem 1.1 and Main Theorem A was contained in the proof of Theorem 1.1.
They proved these results without work of [MM1].
As a corollary of Theorem 1.1 they showed the same results as Corollary C.
Moreover, they showed the follow results:
In Corollary 3.1 they showed that $t_{C}$ has a root of degree $n$ for all $g+1>(n-2)(n-1)/2$.
In Corollary 3.2 they showed that when $n$ is prime and $g=(n-2)(n-1)/2$, $t_{c}$ does not have a root of degree $n$ \ (Corollary 3.2).
In Theorem 4.2 they proved that if $n\geq g\geq 4$ then the root is one of 2 types of roots \ (Theorem 4.2).

The author thanks Darryl McCullough and Kashyap Rajeevsarathy for their comments on the content of this paper.

\section{Preliminary}
Hereafter, all surfaces will be oriented, and all homeomorphisms between them will be orientation-preserving.
For us, a Dehn twist means a left-handed Dehn twist.
Let $\Sigma_{g+1}$ be a closed connected surface of genus $g+1\geq 2$.

\subsection{Pseudo-periodic map and screw number}
\begin{Def}
A homeomorphism $f:\Sigma_{g+1}\rightarrow \Sigma_{g+1}$ is a \underline{pseudo-periodic map} isotopic to a homeomorphism $f^{\prime}:\Sigma_{g+1}\rightarrow \Sigma_{g+1}$ which satisfies the following conditions:
\begin{description}
\item[(1)] There exists a disjoint union of simple closed curves ${\cal C}=C_{1}\bigcup C_{2}\bigcup \ldots \bigcup C_{r}$ on $\Sigma_{g+1}$ such that $f^{\prime}({\cal C})={\cal C}$ $({\cal C} \ might \ be \ empty)$.
\item[(2)] The restriction $f^{\prime}|_{(\Sigma_{g+1}-{\cal C})}:\Sigma_{g+1}-{\cal C}\rightarrow \Sigma_{g+1}-{\cal C}$ is isotopic to a periodic map.
\end{description}
\end{Def}

We call $\{C_{i}\}_{i=1}^{r}$ an \underline{admissible system of cut curves} if each connected component of $\Sigma_{g+1}-{\cal C}$ has negative Euler number.
If $\Sigma_{g+1}$ has negative Euler number, such a system always exists in the isotopy class of $f$.

Nielsen [Ni] introduced the "screw number" $s(C_{i})$ for each component $C_{i}$ of admissible system of cut curves ${\cal C}$. The definition is as follows:

We may assume $f({\cal C})={\cal C}$.
Let $C_{i}$ be a fixed cut curve in the system.

Let $\alpha_{i}$ be the smallest positive integer $\alpha_{i}$ such that $f^{\alpha_{i}}(\overrightarrow{C_{i}})=\overrightarrow{C_{i}}$ where $\overrightarrow{C_{i}}$ denotes $C_{i}$ with an orientation assigned, so $f^{\alpha_{i}}$ preserves the (arbitrarily fixed) direction of $C_{i}$.

Let $b_{i}$ and $b_{i}^{\prime}$ be the connected components of $\Sigma_{g+1}-{\cal C}$ such that $C_{i}$ belongs to their adherence in $\Sigma_{g+1}$.
Let $\beta_{i}$ (resp. $\beta_{i}^{\prime}$) be the smallest positive integer such that $f^{\beta_{i}}(b_{i})=b_{i}$ (resp. $f^{\beta_{i}^{\prime}}(b_{i}^{\prime})=b_{i}^{\prime}$).
Clearly $\alpha_{i}$ is a common multiple of $\beta_{i}$ and $\beta_{i}^{\prime}$.

Since $f|_{(\Sigma_{g+1}-{\cal C})}$ is isotopic to a periodic map, there exists a positive integer $n_{b_{i}}$ such that $(f^{\beta_{i}}|_{b_{i}})^{n_{b_{i}}}\cong id|_{b_{i}}$.
We choose the smallest number among such integers and denote it by $n_{b_{i}}$ again.
Likewise we choose $n_{b_{i}^{\prime}}(>0)$ for $b_{i}^{\prime}$.

Let $L_{i}$ be the least common multiple of $n_{b_{i}}\beta_{i}$ and $n_{b_{i}^{\prime}}\beta_{i}^{\prime}$.
Then $f^{L_{i}}$ is isotopic to the identity on $b_{i}$ and $b_{i}^{\prime}$.
Thus, on the union $b_{i}\bigcup C_{i}\bigcup b_{i}^{\prime}$, $f^{L_{i}}$ is isotopic to the result of a number full Dehn twists performed about $C_{i}$.
Let $e_{i}$ ($\in \mathbb{Z}$) be this number of full twists.

\begin{Def}
The rational number $e_{i}\alpha_{i}/L_{i}$ is called the \underline{screw number} of $f$ about $C_{i}$ and is denoted by $s(C_{i})$.
\end{Def}
\begin{Def}
A pseudo-periodic map $f$ is said to be of \underline{negative twist} if $s(C_{i})<0$ for each component $C_{i}$ of admissible system of cut curves ${\cal C}$.
\end{Def}
\begin{Def}
The curve $C_{i}$ is said to be \underline{amphidrome} if $\alpha_{i}$ is even and $f^{\alpha_{i}/2}(\overrightarrow{C_{i}})=-\overrightarrow{C_{i}}$, and non-amphidrome otherwise.
\end{Def}
\begin{theorem}[{\rm [MM1]}, Theorem 2.2]\label{MM}
The conjugacy class of a negative twist $f:\Sigma_{g+1}\rightarrow \Sigma_{g+1}$ is determined by following data:
\begin{itemize}
\item An admissible system of cut curves ${\cal C}=\displaystyle \bigcup_{i}^{r} C_{i}$ on $\Sigma_{g+1}$.
\end{itemize}
For all $i=1,\ldots,r$,
\begin{itemize}
\item $\alpha_{i}$ : the smallest positive integer $\alpha_{i}$ such that $f^{\alpha_{i}}(\overrightarrow{C_{i}})=\overrightarrow{C_{i}}$,
\item the screw number $s(C_{i})$ of $f$, and
\item $C_{i}$'s character of being amphidrome or not.
\end{itemize}
For each connected component $b$ of $\Sigma_{g+1}-{\cal C}$,
\begin{itemize}
\item $\beta$ : the smallest positive integer such that $f^{\beta}(b)=b$,
\item $n_{b}$ : the smallest positive integer such that $(f^{\beta}|_{b})^{n_{b}}\cong id|_{b}$, and
\item the conjugacy class of a periodic map $f^{\beta}|_{b}$.
\item The action of $f$ on the oriented graph $G_{{\cal C}}$ whose vertices and edges correspond to connected components of $\Sigma_{g+1}-{\cal C}$ and $\{C_{i}\}_{i=1}^{r}$.
\end{itemize}
\end{theorem}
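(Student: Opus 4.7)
The plan is to establish the theorem in two directions: first, that the listed data are genuine conjugacy invariants of a negative twist $f$, and second, that they form a complete set, so that any two negative twists realizing the same data are conjugate in ${\cal M}_{g+1}$. The invariance direction is essentially bookkeeping: if $hfh^{-1}=f'$ and $\mathcal{C}$ is an admissible system for $f$, then $h(\mathcal{C})$ is admissible for $f'$, and under this correspondence the integers $\alpha_i,\beta,n_b$, the amphidrome character, the screw number (being computed from the isotopy class of $f^{L_i}$ near $C_i$), and the graph $G_\mathcal{C}$ with its $f$-action all transport along. So the real content lies in the completeness direction.

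For completeness, suppose $f$ and $f'$ carry identical data. I would use a cut-conjugate-and-reglue argument. First, use the isomorphism of oriented graphs $G_\mathcal{C}\cong G_{\mathcal{C}'}$, equivariant with respect to the $f$- and $f'$-actions, to match each component $b$ of $\Sigma_{g+1}-\mathcal{C}$ with a target component $b'$ of $\Sigma_{g+1}-\mathcal{C}'$. For each $f$-orbit of components pick one representative $b$, and on that representative use the equality of $\beta,n_b$ and of the conjugacy class of $f^\beta|_b$ to invoke Nielsen's classification of periodic surface maps and produce a homeomorphism $h_b\colon b\to b'$ conjugating $f^\beta|_b$ to $(f')^{\beta'}|_{b'}$. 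Propagating via $h_{f^k(b)}=(f')^k\circ h_b\circ f^{-k}$ produces an $f$-equivariant family $\{h_b\}$ that conjugates $f$ to $f'$ away from a neighborhood of $\mathcal{C}$, up to isotopy on each piece.

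The hard step is to extend this across the cut curves. On an annular neighborhood $A_i$ of $C_i$, after using the boundary values of $h_b$ and $h_{b'}$ to normalize both sides, $f^{L_i}|_{A_i}$ is isotopic to $e_i$ full Dehn twists about $C_i$, and one must match this with $e_i'$ full twists for $f'|_{A_i'}$. Here both the screw-number equality $s(C_i)=s(C_i')$ and the amphidrome datum enter: the former forces $e_i=e_i'$ once $\alpha_i$ and $L_i$ are matched, while the latter dictates whether $f^{\alpha_i/2}$ reverses $\overrightarrow{C_i}$ and hence which local model (non-amphidrome annulus or its amphidrome variant) one glues in. The negative twist hypothesis $s(C_i)<0$ is essential to remove the twist/inverse-twist ambiguity and to pin down a unique local model up to isotopy, for without a sign restriction the gluing data would fall into two distinct classes differing by twist inversion.

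The main obstacle, therefore, is this annular gluing step: one must verify carefully that a local conjugation matching the rotation data on $\partial A_i$ can be extended over the interior so as to realize the prescribed screw number and amphidrome character, and that such an extension is unique up to isotopy once the sign of $s(C_i)$ is fixed. Once this normal-form lemma for each twist region is in place, assembling the annular conjugations with the family $\{h_b\}$ produces a global homeomorphism $h$ of $\Sigma_{g+1}$ with $hfh^{-1}\simeq f'$, completing the completeness direction and the theorem.
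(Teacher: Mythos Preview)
This theorem is not proved in the paper at all: it is quoted verbatim as Theorem~2.2 of [MM1] (Matsumoto--Montesinos) and used as a black box in the proof of Main Theorem~A. Consequently there is no ``paper's own proof'' to compare your attempt against.

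That said, your outline is a reasonable sketch of how the Matsumoto--Montesinos argument actually goes. The invariance direction is indeed bookkeeping, and the completeness direction does proceed by matching complementary pieces via Nielsen's classification of periodic maps and then gluing across annular neighborhoods of the cut curves using a local normal-form lemma for twist regions. Your identification of the annular gluing as the substantive step is accurate; in [MM1] this is handled by an explicit linear-twist model on $A_i$ governed by the screw number and the amphidrome/non-amphidrome dichotomy, which is essentially what you describe. One point you underplay: the negative-twist hypothesis is used not merely to break a sign ambiguity but to guarantee that the admissible system $\mathcal{C}$ with all $s(C_i)\neq 0$ is itself a conjugacy invariant (minimal reducing system), so that the ``same data'' hypothesis is well-posed in the first place. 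Without that, two conjugate maps could carry non-corresponding cut systems and the comparison would not get started.
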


\subsection{Valency}

Let $\Sigma$ be an oriented connected surface and let $f:\Sigma \rightarrow \Sigma$ be a periodic map of order $n>1$.
Let $p$ be a point on $\Sigma$.
There is a positive integer $\alpha(p)$ such that the points $p,f(p),\ldots,f^{\alpha(p)-1}(p)$ are mutually distinct and $f^{\alpha(p)}(p)=p$.
If $\alpha(p)=n$, we call the point $p$ a \underline{simple point} of $f$, while if $\alpha(p)<n$, we call $p$ a \underline{multiple point} of $f$.
Note that a multiple point is an isolated and interior point of $\Sigma$

Let $\overrightarrow{{\cal C}^{\prime}}=\overrightarrow{C^{\prime}_{1}}\bigcup \overrightarrow{C^{\prime}_{2}}\bigcup \ldots \bigcup \overrightarrow{C^{\prime}_{s}}$ be a set of oriented and disjoint simple closed curves in a surface $\Sigma$, and let $g$ be a map $g:\Sigma \rightarrow \Sigma$ such that $g(\overrightarrow{{\cal C}^{\prime}})=\overrightarrow{{\cal C}^{\prime}}$ and $g|_{\overrightarrow{{\cal C}}^{\prime}}$ is periodic.
Let $m_{j}$ be the smallest positive integer such that $g^{m_{j}}(\overrightarrow{C^{\prime}_{j}})=\overrightarrow{C^{\prime}_{j}}$.
The restriction $g^{m_{j}}|_{\overrightarrow{C^{\prime}_{j}}}$ is a periodic map of $\overrightarrow{C^{\prime}_{j}}$ of order, say, $\lambda_{j}>0$.
Then $m\lambda_{j}=n$ (= the order of $g|_{\overrightarrow{C_{j}}}$).
Let $q$ be any point on $C^{\prime}_{j}$, and suppose that the images of $q$ under the iteration of $g^{m_{j}}$ are ordered as $(q, g^{m_{j}\sigma_{j}}(q), g^{2m_{j}\sigma_{j}}(q),\ldots, g^{(\lambda_{j}-1)m_{j}\sigma_{j}}(q))$ viewed in the direction of $\overrightarrow{C^{\prime}_{j}}$, where $\sigma_{j}$ is an integer with $0\leq \sigma_{j}\leq \lambda_{j}-1$ and $\gcd(\sigma_{j}, \lambda_{j})=1$ $(j=1,\ldots, s)$.
So $\sigma_{j}=0$ iff $\lambda_{j}=1$.
Let $\delta_{j}$ be the integer which satisfies 
\begin{eqnarray*}
\sigma_{j}\delta_{j}\equiv 1\pmod{\lambda_{j}}, \ \ 0\leq \delta_{j}\leq \lambda_{j}-1 \ \ (j=1,\ldots,s)
\end{eqnarray*}
(NB. $\delta_{j}=0$ iff $\lambda_{j}=1$).
Then the action of $g^{m_{j}}$ on $\overrightarrow{C^{\prime}_{j}}$ is the rotation of angle $2\pi \delta_{j}/\lambda_{j}$ with a suitable parametrization of $\overrightarrow{C^{\prime}_{j}}$ as an oriented circle.
\begin{Def}[{\rm [Ni]}]
The triple $(m_{i},\lambda_{i},\sigma_{i})$ and $(m_{i},\lambda_{i},\delta_{i})$ are called \underline{the valency} and \underline{the second valency} of $\overrightarrow{C^{\prime}_{j}}\in \overrightarrow{{\cal C}^{\prime}}$ \underline{with respect to} $g$.
\end{Def}

Nielsen also defined the \underline{valency of a boundary curve} as its valency with respect to $f$ assuming it has the orientation induced by the surface $\Sigma$.
The \underline{valency of a multiple point} $p$ is defined to be the valency of the boundary curve $\partial D_{p}$, oriented from the outside of a disk neighborhood $D_{p}$ of $p$.

\

Let ${\cal C}=\displaystyle \bigcup_{i=1}^{r}C_{i}$ be an admissible system of cut curves subordinate to a pseudo-periodic map $f:\Sigma_{g+1}\rightarrow \Sigma_{g+1}$.
We assume that $s(C_{i})$ of $f$ is non-zero $(i=1,\ldots,r)$

Thus Matsumoto and Montesinos proved the following proposition :
\begin{prop}[\rm{[MM1]}, Corollary 3.3.1, Corollary 3.7.1]\label{pro1}
Let $A_{i}$ be an annular neighborhood of $C_{i}$ with $\partial A_{i}=\partial_{0}A_{i}\bigcup \partial_{1}A_{i}$.
Let $(m_{i}^{\nu},\lambda_{i}^{\nu},\delta_{i}^{\nu})$ be the second valency of $\partial_{\nu}A_{i}$ with respect to $f$ $(\nu=0,1)$.
The second valency $(m_{i}^{\nu},\lambda_{i}^{\nu},\delta_{i}^{\nu})$ of $\partial_{\nu}A_{i}$ oriented by the orientation induced from $A_{i}$ is defined as the valency of $-\overrightarrow{C_{i}}$ with respect to the $f$.

Then, If $C_{i}$ is non-amphidrome,
\begin{description}
\item[(1)] $m^{0}_{i}=m^{1}_{i}$,
\item[(2)] $s(C_{i})+\delta^{0}_{i}/\lambda^{0}_{i}+\delta^{1}_{i}/\lambda^{1}_{i}$ is an integer.
\end{description}
If $C_{i}$ is amphidrome,
\begin{description}
\item[(1)] $m^{0}_{i}=m^{1}_{i}$= an even number,
\item[(2)] $\delta_{i}^{0}=\delta_{i}^{1}$ and $\lambda_{i}^{0}=\lambda_{i}^{1}$,
\item[(3)] $s(C_{i})/2+\delta_{i}/\lambda_{i}$ is an integer,
\end{description}
$(\lambda_{i}$ denotes $\lambda^{0}_{i}=\lambda^{1}_{i}$, and $\delta_{i}$ denotes $\delta^{0}_{i}=\delta^{1}_{i})$.
\end{prop}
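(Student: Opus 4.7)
My plan is to analyze $f^{\alpha_i}$ on an annular neighborhood of $C_i$ via lifts to the universal cover of the annulus, translating the screw-number condition into a translation-number computation on the two boundary circles.

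Concretely, I would fix a parametrization $A_i = S^1 \times [-1,1]$ with $C_i = S^1 \times \{0\}$ oriented by $+\partial_{\theta}$. After an isotopy within its class, $f^{\alpha_i}|_{A_i}$ can be put in the form $(\theta,t) \mapsto (\theta + T(t), t)$, and lifting to $\mathbb{R} \times [-1,1]$ gives a definite homeomorphism $\widetilde{F}(\theta,t) = (\theta + T(t), t)$, with the real numbers $T_0 := T(-1)$ and $T_1 := T(1)$ determined up to an overall integer shift by the choice of lift. With respect to the boundary orientations induced from $A_i$, namely $+\overrightarrow{C_i}$ on $\partial_0 A_i$ and $-\overrightarrow{C_i}$ on $\partial_1 A_i$, a direct check gives $T_0 \equiv \delta_i^0/\lambda_i^0$ and $T_1 \equiv -\delta_i^1/\lambda_i^1 \pmod{1}$.

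Part (1) in the non-amphidrome case is immediate: since $f$ preserves the surface orientation and $f^{\alpha_i}$ preserves $\overrightarrow{C_i}$, it preserves each side of $C_i$ and hence each $\partial_\nu A_i$ with induced orientation, so $m_i^\nu = \alpha_i$ (any $f^k$ preserving $\partial_\nu A_i$ with orientation would preserve $\overrightarrow{C_i}$, contradicting minimality of $\alpha_i$). In the amphidrome case, $f^{\alpha_i/2}$ swaps $\partial_0 A_i$ and $\partial_1 A_i$, so one still has $m_i^0 = m_i^1 = \alpha_i$, which is even by definition. For the amphidrome refinement (2), the homeomorphism $f^{\alpha_i/2} \colon \partial_0 A_i \to \partial_1 A_i$ is orientation-preserving with respect to the induced orientations and conjugates $f^{\alpha_i}|_{\partial_0 A_i}$ to $f^{\alpha_i}|_{\partial_1 A_i}$, forcing the two rotations to agree and hence $\lambda_i^0 = \lambda_i^1$ and $\delta_i^0 = \delta_i^1$.

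For the screw-number identities, I would iterate to get $\widetilde{F}^{L_i/\alpha_i}$ as a lift of $f^{L_i}|_{A_i}$. Since $f^{L_i}$ is isotopic to $t_{C_i}^{e_i}$ via an isotopy supported in $A_i$ (trivial on the regions $b_i$ and $b_i'$), after adjusting lifts by a common integer one obtains boundary translations $0$ and $e_i$; comparing with $(L_i/\alpha_i) T_\nu$ gives $(L_i/\alpha_i)(T_1 - T_0) = e_i$, i.e., $s(C_i) = e_i \alpha_i / L_i = T_1 - T_0$. Combined with the congruences for the $T_\nu$ above, this yields $s(C_i) + \delta_i^0/\lambda_i^0 + \delta_i^1/\lambda_i^1 \in \mathbb{Z}$, which is (2) in the non-amphidrome case. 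In the amphidrome case, lifting $f^{\alpha_i/2}|_{A_i}$ as a flip $(\theta,t) \mapsto (-\theta + \psi(t), -t)$ and squaring shows that $T(t) = \psi(t) - \psi(-t)$ is an odd function, so $T_0 = -T_1$; hence $s(C_i) = 2T_1$, and $s(C_i)/2 \equiv -\delta_i/\lambda_i \pmod{1}$, giving (3).

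The main subtlety I expect is the orientation and lift bookkeeping: identifying which induced orientation on $\partial_\nu A_i$ corresponds to which sign of $\delta_i^\nu$, and verifying that the particular lift of $f^{L_i}|_{A_i}$ obtained by iterating my chosen lift of $f^{\alpha_i}|_{A_i}$ can be compared directly with a lift of $t_{C_i}^{e_i}$ (the remaining ambiguity being an integer translation that cancels in the difference $T_1 - T_0$). Once the conventions are pinned down, (1), (2), and the screw-number identities all fall out of the translation-number computation on the annulus.
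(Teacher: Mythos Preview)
The paper does not prove this proposition at all: it is quoted verbatim from Matsumoto--Montesinos [MM1] (Corollaries~3.3.1 and~3.7.1 there) and used as a black box, so there is no ``paper's own proof'' to compare against.

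That said, your proposal is a correct self-contained argument and is essentially the standard one (and almost certainly the one in [MM1]): represent $f^{\alpha_i}$ on the annulus by a fibered rotation $(\theta,t)\mapsto(\theta+T(t),t)$, read off the rotation numbers $T_0,T_1$ on the two boundary circles as $\delta_i^\nu/\lambda_i^\nu$ (with the appropriate signs coming from the induced boundary orientations), and identify the screw number with the difference $T_1-T_0$ by iterating to $f^{L_i}$ and counting Dehn twists. The amphidrome refinement via the involutive model $(\theta,t)\mapsto(-\theta+\psi(t),-t)$ for $f^{\alpha_i/2}$ is exactly right and forces $T$ to be odd, giving $s(C_i)=2T_1$.

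Two small points worth tightening. First, the normal form $(\theta,t)\mapsto(\theta+T(t),t)$ is not automatic for an arbitrary representative of $f^{\alpha_i}$; you should first invoke that $f|_{\Sigma_{g+1}-\mathcal{C}}$ is isotopic to a genuine periodic map, so that on each $\partial_\nu A_i$ the restriction is an honest rotation, and then straighten on the annulus. Second, your orientation bookkeeping (which sign of $\delta_i^\nu/\lambda_i^\nu$ goes with which boundary) is the only place a mistake could creep in; once you fix the convention that both $\partial_\nu A_i$ carry the orientation induced from $A_i$ (as the proposition stipulates), the signs in $T_0\equiv\delta_i^0/\lambda_i^0$ and $T_1\equiv-\delta_i^1/\lambda_i^1\pmod 1$ are consistent and the identity $s(C_i)+\delta_i^0/\lambda_i^0+\delta_i^1/\lambda_i^1\in\mathbb{Z}$ follows.
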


\section{The conjugacy classes of roots of the Dehn twist about a nonseparating curve}
\subsection{Proof of Main Theorem A}
In section 3.1 we do not distinguish a homeomorphism/curve and its isotopy class.

Let $t_{C}$ be the Dehn twist about a simple closed curve $C$ in $\Sigma_{g+1}$.
Suppose that $h$ is a root of $t_{C}$ of degree $n>1$.
Since we have 
\begin{eqnarray*}
t_{C}=h^{n}=hh^{n}h^{-1}=ht_{C}h^{-1}=t_{h(C)},
\end{eqnarray*}
we see that $h(C)=C$.
By $t_{C}=h^{n}$, $h|_{\Sigma_{g+1}-\{C\}}$ must be a periodic map of order $n$.
Therefore $h$ is a pseudo-periodic map, and an admissible system of cut curves is ${\cal C}=C$.

Let $A$ be an annular neighborhood of $C$.
Let $(m^{0},\lambda^{0},\delta^{0})$ and $(m^{1},\lambda^{1},\delta^{1})$ be the second valencies of  $\partial_{0}A$ and $\partial_{1}A$ with respect to $h$.

\begin{claim}\label{cl}
The simple closed curve $C$ is non-amphidrome with respect to $h$.
\end{claim}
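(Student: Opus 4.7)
The plan is a proof by contradiction: assume $C$ is amphidrome with respect to $h$, compute the screw number $s(C)$ explicitly, and then show that Proposition~\ref{pro1} together with a direct order computation on $\partial A$ forces incompatible values of $\lambda$. By the definition of amphidrome, $\alpha=2$ and $h(\overrightarrow{C})=-\overrightarrow{C}$, so $h$ interchanges $\partial_{0}A$ and $\partial_{1}A$ as oriented curves with the orientation induced from $A$ (since $h$ is orientation-preserving on $A$).

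First I would compute $s(C)$. Since $C$ is nonseparating, $b=b'=\Sigma_{g+1}-C$ and $\beta=\beta'=1$. The key sub-step is that $h|_{\Sigma_{g+1}-C}$ has order exactly $n$: if $h^{n'}\cong\mathrm{id}$ on $\Sigma_{g+1}-C$ for some $1\le n'<n$, then $h^{n'}=t_{C}^{k}$ in $\mathcal{M}_{g+1}$ for some $k\in\mathbb{Z}$, and the identity $t_{C}^{n'}=(h^{n})^{n'}=(h^{n'})^{n}=t_{C}^{kn}$ forces $n'=kn$, a contradiction. Hence $n_{b}=n_{b'}=n$ and $L=n$, so $h^{L}=h^{n}=t_{C}$ is a single Dehn twist; thus $e=\pm 1$ (the sign dictated by conventions) and $s(C)=e\alpha/L=\pm 2/n$.

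Next I would apply Proposition~\ref{pro1} in the amphidrome case, which gives $s(C)/2+\delta/\lambda\in\mathbb{Z}$ with $\lambda=\lambda^{0}=\lambda^{1}$, $\delta=\delta^{0}=\delta^{1}$, $\gcd(\delta,\lambda)=1$ and $0\le\delta\le\lambda-1$. This becomes $\pm 1/n+\delta/\lambda\in\mathbb{Z}$; since $\delta/\lambda\in[0,1)$ and $|{\pm 1/n}|\in(0,1)$, the only possibilities are $\delta/\lambda=1/n$ or $(n-1)/n$, and in either case $\gcd(\delta,\lambda)=1$ forces $\lambda=n$. On the other hand, because $h$ swaps $\partial_{0}A$ and $\partial_{1}A$, we have $m^{0}=2$, and $\lambda$ is the order of $h^{2}|_{\partial_{0}A}$. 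But $(h^{2})^{n/2}=h^{n}=t_{C}$ restricts to the identity on $\partial_{0}A$ (the Dehn twist is supported in the interior of $A$), so $\lambda\mid n/2$; in particular $\lambda\le n/2<n$, contradicting $\lambda=n$.

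I expect the only delicate issue to be the bookkeeping in the screw-number computation, specifically verifying that the order of $h$ on $\Sigma_{g+1}-C$ is exactly $n$ and that the sign of $e$ is irrelevant (the conclusion $\lambda=n$ is symmetric in the sign). Everything else is a short arithmetic check once the hypotheses of Proposition~\ref{pro1} are in place.
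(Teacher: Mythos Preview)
Your argument is correct and follows the same route as the paper's proof: assume amphidrome, deduce $\alpha=2$, compute $s(C)$ via $L=n$ and $e=\pm1$, apply the amphidrome case of Proposition~\ref{pro1} to force $\lambda=n$, and then contradict this using the order of $h^{2}$ on $\partial_{0}A$. The only omission is that you invoke $(h^{2})^{n/2}$ without first noting that $n$ is even; the paper records this explicitly (since $h^{n}=t_{C}$ preserves $\overrightarrow{C}$ and $\alpha=2$ is the least such exponent, $2\mid n$), and you should insert that one line as well.
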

\begin{proof}
We assume that $C$ is amphidrome with respect to $h$.

We will determine the screw number $s(C)$ of $h$.
Let $b$ be $\Sigma_{g+1}-C$.
Let $\alpha$, $\beta$ and $n_{b}$ be the smallest positive integers such that $h^{\alpha}(\overrightarrow{C})=\overrightarrow{C}$, $h^{\beta}(b)=b$ and $(h^{\beta}|_{b})^{n_{b}}=id_{b}$.
Since $h(C)=C$ and $C$ is amphidrome, we have $\alpha=m^{0}=m^{1}=2$.
Moreover, we have $\beta=1$, $n_{b}=L$.
Thus, by $h^{n}(\overrightarrow{C})=t_{C}(\overrightarrow{C})=\overrightarrow{C}$, we can denote $n=2k$.
By definition of $L>0$, $L$ is a divisor of $n$ ($z:=n/L \in \mathbb{Z}_{\geq 1}$).
Then, we have $t_{C}=h^{n}=(h^{L})^{z}=(t_{C}^{e})^{z}=t_{C}^{ez}$.
Therefore, we see that $e=1$ and $L=2k=n$.
From the above arguments, we have
\begin{eqnarray*}
s(C)=1/k.
\end{eqnarray*}
By proposition~\ref{pro1}, we have
\begin{eqnarray*}
\delta/\lambda=(2k-1)/2k,
\end{eqnarray*}
(Here $\lambda$ denotes $\lambda^{0}=\lambda^{1}$, and $\delta$ denotes $\delta^{0}=\delta^{1}$.)
However, since $n=2k$ and the action of $h^{2}$ is the rotation of angle $2\pi \delta/\lambda$ in circle, $\delta/\lambda$ must be equal to $\delta/k$.
This is a contradiction.
\end{proof}

\

\begin{lemma}\label{val}
Let $C$ be a nonseparating simple closed curve in $\Sigma_{g+1}$.
Then,
\begin{eqnarray*}\label{rel}
\delta^{0}+\delta^{1}=n-1 \ \ (1\leq \delta^{\nu}\leq n-2 ,\ \nu=0,1)
\end{eqnarray*}
\end{lemma}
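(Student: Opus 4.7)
The plan is to mimic the computation in the proof of Claim~\ref{cl}, now in the non-amphidrome situation, in order to evaluate the screw number, and then to feed the result into Proposition~\ref{pro1}.

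First I would record the basic periodicity data of $h$ along the admissible system ${\cal C}=C$. Since $C$ is non-amphidrome by Claim~\ref{cl} and $h(C)=C$, we have $h(\overrightarrow{C})=\overrightarrow{C}$, so $\alpha=1$ and hence $m^{0}=m^{1}=1$. Because $C$ is nonseparating, $b:=\Sigma_{g+1}-C$ is connected, so $\beta=1$ and, with $L=n_{b}$, the complementary piece satisfies $b=b'$, $\beta=\beta'$, $n_{b}=n_{b'}$. From $h^{n}=t_{C}$ we get $(h|_{b})^{n}\cong \mathrm{id}|_{b}$, so $n_{b}\mid n$; writing $n=n_{b}z=Lz$ and $h^{L}\cong t_{C}^{e}$, the relation $t_{C}=h^{n}=(h^{L})^{z}=t_{C}^{ez}$ forces $e=z=1$ and hence $L=n$. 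Therefore
\begin{equation*}
s(C)=\frac{e\,\alpha}{L}=\frac{1}{n}.
\end{equation*}

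Next I would determine the second valencies. Since $m^{\nu}=1$ and the order of $h|_{\partial_{\nu}A}$ equals the order of the rotation $(h|_{b})$ induces on this boundary circle, and since $(h|_{b})^{n_{b}}=(h|_{b})^{n}\cong \mathrm{id}|_{b}$, one has $\lambda^{0}=\lambda^{1}=n$. Then Proposition~\ref{pro1}\,(2) in the non-amphidrome case gives
\begin{equation*}
\frac{1}{n}+\frac{\delta^{0}}{n}+\frac{\delta^{1}}{n}\in\mathbb{Z},
\qquad\text{i.e.}\qquad 1+\delta^{0}+\delta^{1}\equiv 0\pmod{n}.
\end{equation*}

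Finally, I would combine this congruence with the admissible ranges. By definition of the second valency, $0\leq \delta^{\nu}\leq \lambda^{\nu}-1=n-1$, so $1+\delta^{0}+\delta^{1}$ lies in $\{1,\dots,2n-1\}$, and the only value divisible by $n$ is $n$ itself; hence $\delta^{0}+\delta^{1}=n-1$. Moreover, since $\lambda^{\nu}=n>1$ the definition forces $\delta^{\nu}\neq 0$ and $\gcd(\delta^{\nu},n)=1$, so $1\leq \delta^{\nu}$, and together with $\delta^{0}+\delta^{1}=n-1$ this also gives $\delta^{\nu}\leq n-2$, completing the lemma.

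The only delicate point I expect is justifying $L=n$ (equivalently $n_{b}=n$), which is what pins down $s(C)=1/n$; all the remaining arithmetic is then a direct application of Proposition~\ref{pro1} and the definitional bounds on $\delta^{\nu}$.
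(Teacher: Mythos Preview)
Your overall approach is the same as the paper's: compute $s(C)=1/n$ via $L=n$, feed this into Proposition~\ref{pro1}, and read off the relation on $\delta^{0},\delta^{1}$. The arithmetic at the end (using the bounds $0\le\delta^{\nu}\le n-1$ to upgrade the congruence to the exact equality $\delta^{0}+\delta^{1}=n-1$, and then deducing $1\le\delta^{\nu}\le n-2$) is correct and in fact more explicit than the paper's.

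There is, however, one genuine gap, and it is not the point you flagged. Your justification of $\lambda^{0}=\lambda^{1}=n$ reads: ``since $(h|_{b})^{n_{b}}=(h|_{b})^{n}\cong \mathrm{id}|_{b}$, one has $\lambda^{0}=\lambda^{1}=n$.'' But from $(h|_{b})^{n}\cong\mathrm{id}$ you only get $\lambda^{\nu}\mid n$; nothing you wrote rules out a boundary rotation of strictly smaller order. This is precisely where the hypothesis that $C$ is \emph{nonseparating} is used: it makes $b=\Sigma_{g+1}-C$ connected, and a finite-order homeomorphism of a connected surface that is the identity on an open set (e.g.\ a boundary collar) must be the identity globally, so the boundary rotation has the full order $n_{b}=n$. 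The paper phrases this by capping $\Sigma_{g+1}-A$ with two disks to obtain a \emph{connected} closed surface $\Sigma_{g}$ carrying a period-$n$ map $\tilde{h}$; non-amphidromy forces the disk centers to be fixed points, and a fixed point of a period-$n$ map on a connected surface has local rotation of order exactly $n$, giving $\lambda^{\nu}=n$. Once you supply this step (by either route), your proof is complete and agrees with the paper's.
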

\begin{proof}
We use Proposition~\ref{pro1}.
We will determine the screw number $s(C)$ of $h$.
Since $h(C)=C$ and $C$ is non-amphidrome (by Claim~\ref{cl}), we have $\alpha=m^{0}=m^{1}=1$.
Thus, we can find $s(C)=1/n$ by the similar argument of Claim~\ref{cl}.
Furthermore, we can find that $\lambda^{\nu}$=order of $h|_{\overrightarrow{\partial_{\nu}A}}$ (by $m^{\nu}\lambda^{\nu}$=order of $h|_{\overrightarrow{\partial_{\nu}A}}$ for $\nu=0,1$).
By Proposition~\ref{pro1}, we have $\delta^{0}/\lambda^{0} + \delta^{1}/\lambda^{1}=(n-1)/n$.

Let $\partial_{\nu}(\Sigma_{g+1}-A)$ be the boundary components which corresponds to $\partial_{\nu}A$ $(\nu=0,1)$.
Then, $h|_{(\Sigma_{g+1}-A)}$ is periodic map of order $n (= L)$.
Therefore, we can get periodic map $\tilde{h}:\Sigma_{g}\rightarrow \Sigma_{g}$ of period $n$ by pasting two disks $D_{0}$, $D_{1}$ to $\partial_{0}(\Sigma_{g+1}-A)$, $\partial_{1}(\Sigma_{g+1}-A)$.
Since $A$ is non-amphidrome, we can see that $\tilde{h}$ fixes the center points $D_{0}$, $D_{1}$.
Consequently, the period $\lambda^{\nu}$ $(\nu=0,1)$ of $h|_{\partial_{\nu}A}$ is equal to $n$.
We note that $\delta^{\nu}$ is not equal to 0.
Because, if $\delta^{\nu}=0$ then $n=\lambda^{\nu}$ is equal to $1$ by the definition of $\lambda^{\nu}$.
This is in contradiction to $n>1$.
\end{proof}
\begin{remark}
If $C$ is separating, then we may allow that $\lambda^{0}$ is not equal to $\lambda^{1}$, that $\lambda^{0}, \lambda^{1}<n$, and that either $\delta^{0}$ or $\delta^{1}$ is equal to $0$.
In other words, $(1,\lambda^{0},\delta^{0})$, $(1,\lambda^{1},\delta^{1})$ satisfy any one of the following conditions :
\begin{description}
\item[(1)] \ $\delta^{0}/\lambda^{0} + \delta^{1}/\lambda^{1} = (n-1)/n$\ \ $(\delta^{0}, \delta^{1}\neq 0)$ and $n={\rm lcm}(\lambda^{0},\lambda^{1})$,
\item[(2)] \ $\delta^{0}=0$, $\delta^{1}=n-1$, $\lambda^{0}=1$ and $\lambda^{1}=n$,
\item[(3)] \ $\delta^{0}=n-1$, $\delta^{1}=0$, $\lambda^{0}=n$ and $\lambda^{1}=1$.
\end{description}
\end{remark}

Hereafter, $C$ will be a nonseparating curve on $\Sigma_{g+1}$.
Let $\Sigma_{g,2}$ be a compact oriented surface of genus $g$ with two boundary components $\partial_{0}\Sigma_{g,2}$, $\partial_{1}\Sigma_{g,2}$.

\begin{maintheorema}\label{ro}
The conjugacy classes $(\in {\cal M}_{g+1})$ of the roots of $t_{C}$ of degree $n$ correspond to the conjugacy classes $(\in {\cal M}_{g,2})$ of periodic maps of order $n$ which satisfy following condition:

The action on $\overrightarrow{\partial_{\nu}\Sigma_{g,2}}$ $(\nu=0,1)$ is the rotation angle of $2\pi \delta^{\nu}/n$ such that $\delta^{0}+\delta^{1}=n-1$
$(Here \ \gcd(\delta^{\nu},n)=1, 1\leq \delta^{\nu}\leq n-2 \ (\nu=0,1))$.
\end{maintheorema}
\begin{proof}
We will prove Main Theorem A by using Theorem~\ref{MM}.

Let $h$ be a root of Dehn twist $t_{C}$ of degree $n$.
Let $G_{C}$ be the oriented graph $G_{C}$ whose vertices and edges correspond to connected components of $\Sigma_{g+1}-C$ and $C$.
Since $C$ is non-amphidrome, we can find that the action of $h$ on the oriented graph $G_{C}$ is identity.
From the above arguments, we have ${\cal C}=C$, $\alpha=1$, $\beta=1$, $n_{b}=n$, that $C$ is non-amphidrome, and that the action of $h$ on $G_{C}$ is identity.
These are the same data as $h^{-1}$.

Since the screw number $s(C)$ of $h$ is equal to $1/n$, we see that $s(C)$ of $h^{-1}$ is equal to $-1/n$.
Therefore, $h^{-1}$ is negative twist.
If negative twists are restricted to roots $h^{-1}$ of $t^{-1}_{C}$, the conjugacy class of a root $h^{-1}$ of $t^{-1}_{C}$ is determined by $n$ and the conjugacy class of a periodic maps $h^{-1}|_{\Sigma_{g+1}-C}$ by using Theorem~\ref{MM}.
It means that the conjugacy class of a root $h$ of $t_{C}$ is determined by $n$ and the conjugacy class of a periodic map $h|_{\Sigma_{g+1}-C}=h|_{\Sigma_{g,2}}$.
By Lemma~\ref{val} $h|_{\Sigma_{g+1}-C}=h|_{\Sigma_{g,2}}$ satisfies the condition.

In Section 3.2 we show that a root is constructed from a periodic map on $\Sigma_{g,2}$ which satisfies the condition.
\end{proof}

\subsection{Construction of a root of Dehn twist from a periodic map}
Given a periodic map on $\Sigma_{g,2}$ with the condition of Main Theorem A, we construct a root of Dehn twist as follow:

Let $\tilde{f}$ be a periodic map on $\Sigma_{g,2}$ which satisfies the condition.
Figure~\ref{fig2} shows the periodic map $\tilde{f}$.
\begin{figure}[htbp]
 \begin{center}
  \includegraphics*[width=7cm]{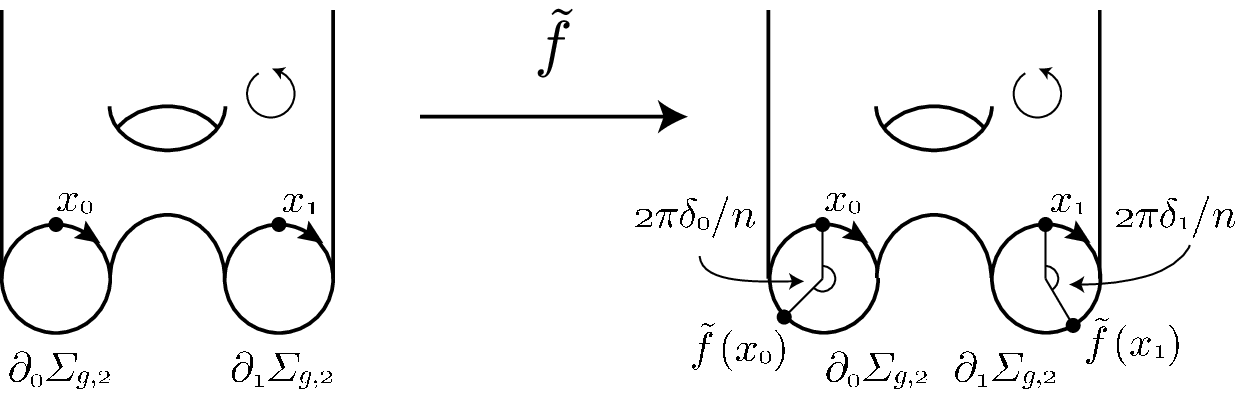}
 \end{center}
 \caption{The periodic map $\tilde{f}$}
 \label{fig2}
\end{figure}

We will extend $\tilde{f}$ to $\Sigma_{g+1}$.
Let us give an orientation to $[0,1]\times S^{1}$ and $\partial_{\nu} \Sigma_{g,2}$ as the figure indicates:
\begin{figure}[htbp]
 \begin{center}
  \includegraphics*[width=1.8cm]{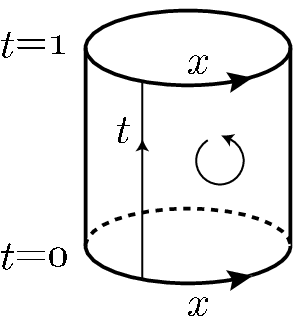}
 \end{center}
 \caption{$[0,1]\times S^{1}$}
 \label{fig3}
\end{figure}

Since $\tilde{f}|_{\partial_{0}\Sigma_{g,2}\bigcup \partial_{1}\Sigma_{g,2}}$ is periodic, there are homeomorphisms $\phi_{0}: \{0\}\times S^{1}\rightarrow \partial_{0} \Sigma_{g,2}$ and $\phi_{1}: \{1\}\times S^{1}\rightarrow \partial_{1} \Sigma_{g,2}$ such that $\tilde{f}\phi_{0}(0,x)=(0,x-\delta^{0}/n)$, $\tilde{f}\phi_{1}(1,x)=(1,x+\delta^{1}/n)$.
Thus, we can get $\Sigma_{g+1}$ by gluing $\{\nu\}\times S^{1}$ to $\partial_{\nu} \Sigma_{g,2}$ by $\phi_{\nu}$ $(\nu=0,1)$.
Let $\phi_{t}$ be a homeomorphism $\phi_{t}:\{t\}\times S^{1}\rightarrow \{t\}\times S^{1}$.
We define $f$ on $\Sigma_{g+1}$ as follow:
\begin{eqnarray*}
f&=&\tilde{f} \ \ on \ \Sigma_{g,2}\\
f\phi_{t}(t, x)&=&\phi_{t}(t, -(\delta^{0}/n)(1-t)x+(\delta^{1}/n)tx) \ \ on \ [0,1]\times S^{1}
\end{eqnarray*}

Figure~\ref{fig4} shows a homeomorphism $f$. 
\begin{figure}[htbp]
 \begin{center}
  \includegraphics*[width=7cm]{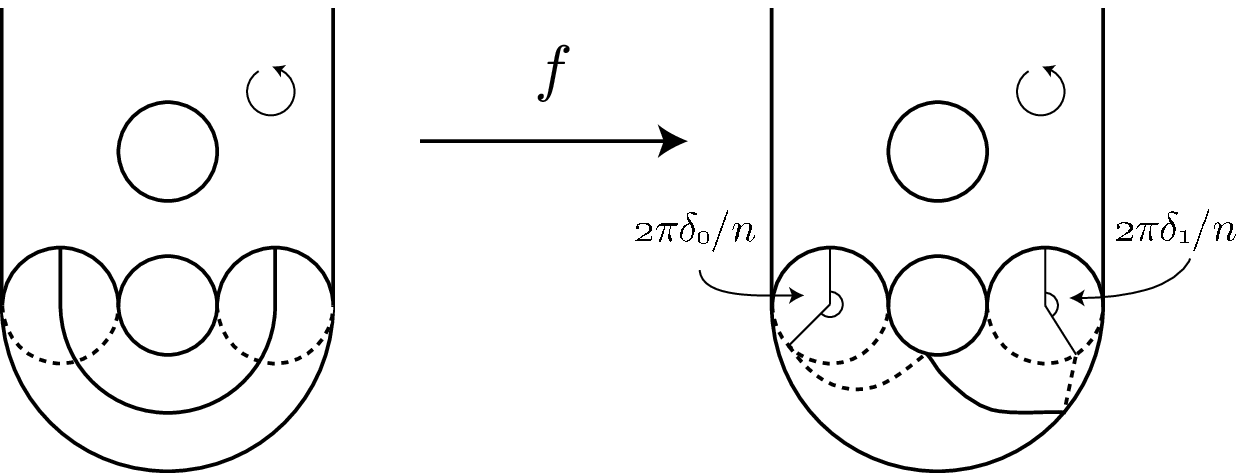}
 \end{center}
 \caption{the homeomorphism $f$}
 \label{fig4}
\end{figure}

Figure~\ref{fig5} shows the homeomorphism $f|_{[0,1]\times S^{1}}$
\begin{figure}[htbp]
 \begin{center}
  \includegraphics*[width=7cm]{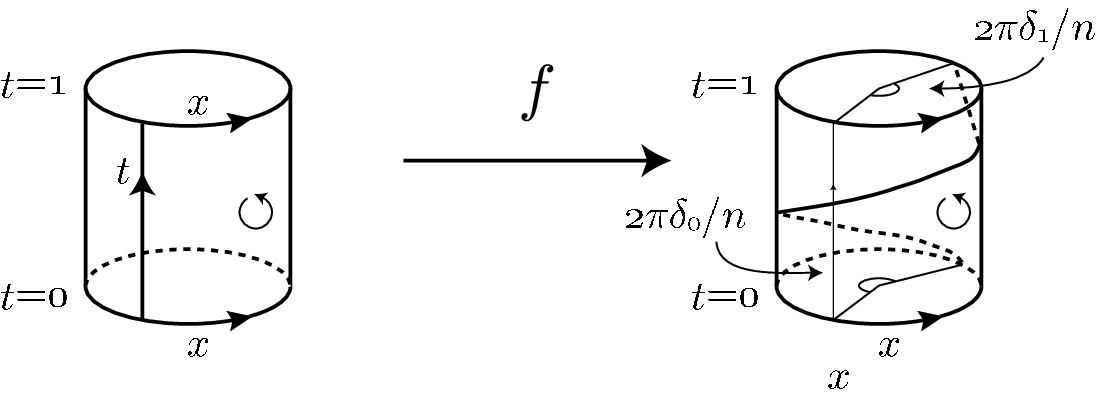}
 \end{center}
 \caption{$f|_{[0,1]\times S^{1}}$}
 \label{fig5}
\end{figure}

Thus, we can see that $f^{n}\phi_{t}(t,x)=\phi_{t}(t,x+(n-1)t-\delta^{0})$.
Therefore, $f^{n}$ is isotopic to $t_{C}^{1-n}$.
So we may change $f$ by isotopy so that $f^{n}=t_{C}^{1-n}$.
By $f([0,1]\times S^{1})=[0,1]\times S^{1}$, $ft_{C}=t_{C}f$.
we have
\begin{eqnarray*}
t_{C}=(t_{C}f)^{n}
\end{eqnarray*}
We have a root $(t_{C}f)$ of $t_{C}$ with degree $n$.
We complete the proof of Main Theorem A.

\subsection{The data set which determines the conjugacy class of a root}

Let $p_{j}$ $(j=1,\ldots,k)$ be a multiple point of a periodic map $\tilde{f}\in {\cal M}_{g,2}$ of Main Theorem A and let $(m_{j},\sigma_{j},\lambda_{j})$ be the valency of $p_{j}$ with respect to $\tilde{f}$.
We note that $m_{j}\lambda_{j}=n$.
Let $(1,\sigma^{\nu},n)$ be the valency of $\partial_{\nu}\Sigma_{g,2}$ $(\nu=0,1)$ respect to $\tilde{f}$.
Note that $\sigma^{\nu}\delta^{\nu}\equiv 1 (\pmod{n})$ and $1\leq \sigma^{\nu}\leq n-1$ (by $n>1$).
By Nielsen's theorem [Ni], it suffices to classify the order of the map, the valencies of multiple points and the valencies of boundary curves.

When we combine the Riemann-Hurwitz formula, Nielsen's theorem and Main Theorem A, we have the following theorem:
\begin{maintheoremb}\label{valency}
The conjugacy class $(\in {\cal M}_{g+1})$ of a root $h$ of $t_{C}$ of degree $n$ is determined by the data $[n,g^{\prime},(\sigma^{0},\sigma^{1});(\sigma_{1},\lambda_{1}),\ldots,(\sigma_{k},\lambda_{k})]$ which satisfies the following conditions:
\begin{description}
\item[(1)] $2g/n=2g^{\prime}+\sum_{j=1}^{k}(1-1/\lambda_{j})$,
\item[(2)] $\sum_{j=1}^{k}\sigma_{j}n/\lambda_{j}+\sigma^{0}+\sigma^{1}\equiv 0(\pmod{n})$,
\item[(3)] $\sigma^{0}+\sigma^{1}+\sigma^{0}\sigma^{1}\equiv 0(\pmod{n})$.
\end{description}
where $g^{\prime}$ is the genus of $\Sigma_{g,2}/\tilde{f}$.
\end{maintheoremb}
McCullough and Rajeevsarathy also got the similar data set in [MR].
The notation $[n,g^{\prime},(\sigma^{0},\sigma^{1});(\sigma_{1},\lambda_{1}),\ldots,(\sigma_{k},\lambda_{k})]$ follows that of [MR].
\begin{corollary}\label{co}
the range of degree $n$ is $3\leq n\leq 2g+1$ and $n$ is odd.
\end{corollary}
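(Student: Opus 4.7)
The plan is to extract the parity condition on $n$ directly from Main Theorem~A, and then to pin down the range of $n$ by a short case analysis on the pair $(g',k)$ coming from conditions (1)--(3) of Main Theorem~B; the main obstacle will be ruling out a potentially small-$g$ configuration in the case $g'=0$, $k=1$.

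For parity, Main Theorem~A gives $\delta^0+\delta^1=n-1$ with $\gcd(\delta^\nu,n)=1$. If $n$ were even, each $\delta^\nu$ would have to be odd (being a unit modulo an even number), so the sum would be even, contradicting $n-1$ being odd. Hence $n$ is odd, and together with $n>1$ this forces $n\ge 3$.

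For the upper bound $n\le 2g+1$, I would rewrite (1) as
\[
2g \;=\; 2g'n + \sum_{j=1}^{k} n\!\left(1-\tfrac{1}{\lambda_j}\right),
\]
and note that $\lambda_j\mid n$ together with $\lambda_j\ge 2$ and $n$ odd forces $\lambda_j\ge 3$ for every $j$. I would then split on $(g',k)$: if $g'\ge 1$, then $2g\ge 2n$, hence $n\le g<2g+1$; if $g'=0$ and $k\ge 2$, then $2g\ge nk(1-\tfrac{1}{3})\ge \tfrac{4n}{3}>n-1$; and if $g'=0$, $k=0$, then (2) yields $\sigma^0+\sigma^1\equiv 0\pmod n$ and (3) then gives $\sigma^0\sigma^1\equiv 0\pmod n$, which is impossible because $\gcd(\sigma^\nu,n)=1$ (from $\sigma^\nu\delta^\nu\equiv 1\pmod n$) and $n>1$.

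The remaining case $g'=0$, $k=1$ is where the argument must bite. Subtracting (2) from (3) gives $\sigma^0\sigma^1\equiv \sigma_1 m_1\pmod n$, where $m_1=n/\lambda_1$. The left side is a unit modulo $n$ (both $\sigma^\nu$ are units), so $\gcd(\sigma_1 m_1,n)=1$; but $m_1\mid n$ implies $m_1\mid\gcd(\sigma_1 m_1,n)$, forcing $m_1=1$ and therefore $\lambda_1=n$. Substituting into (1) yields $2g=n-1$, i.e., $n=2g+1$. Collecting the four cases gives $n\le 2g+1$ in general, and together with $n$ odd and $\ge 3$ this proves the corollary.
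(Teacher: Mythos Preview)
Your proof is correct and follows essentially the same line as the paper's: parity is forced by the boundary-rotation condition (the paper argues via condition~(3) on the $\sigma^\nu$, you via $\delta^0+\delta^1=n-1$ from Main Theorem~A directly, which is equivalent), and the bound $n\le 2g+1$ comes from the same reduction to the case $g'=0$, $k=1$ using conditions (1)--(3), the paper reaching that case by assuming $n>2g+1$ so that $2g/n<1$, while you do the explicit case split on $(g',k)$. The one thing the paper adds that you omit is sharpness of the endpoints: it exhibits a data set realizing $n=3$ for every $g\ge 1$ and invokes Margalit--Schleimer for $n=2g+1$, so that the stated range is actually attained.
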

\begin{proof}
If $n=2$, by $1\leq \sigma^{0},\sigma^{1}\leq n-1$ we have $\sigma^{0}=\sigma^{1}=1$.
This is in contradiction to the condition (3).
Therefore, we see that $n$ is odd.

For $n=3$, if $g^{\prime}=0$, $k=g$ and $\sigma^{0}=\sigma^{1}=1$, we can select $\sigma_{j}$ $(j=1,\ldots,g)$ which satisfy the condition (2).
It means that there always exists the root of degree 3 for $g\geq 1$.
In next section, we will give Dehn twist expression of the root of degree 3.

We assume that $n>2g+1$.
By the condition (1) we have $1>2g/n=2g^{\prime}+\sum_{j=1}^{k}(1-1/\lambda_{j})$ so $g^{\prime}=0$ and $k=1$.
From the Condition (2) and (3) we have $n\sigma_{1}/\lambda_{1}\equiv \sigma^{0}\sigma^{1} \pmod{n}$.
Therefore, we see that $0\equiv n\sigma_{1}\equiv \sigma^{0}\sigma^{1}\lambda_{1} \pmod{n}$.
It means that $\sigma^{0}\sigma^{1}\lambda_{1}/n=\sigma^{0}\sigma^{1}/m_{1}$ is an integer (we note that $m_{1}\lambda_{1}=n$).
Since $\gcd(\sigma^{0},n)=\gcd(\sigma^{1},n)=1$ and $m_{1}\lambda_{1}=n$, we see that $m_{1}$ must be equal to $1$.
It means that $\lambda_{1}=n$.
Thus we have $2g/n=1-1/n$ so $n=2g+1$.
This is in contradiction to $n>2g+1$.
Since Margalit and Schleimer constructed the root of degree $2g+1$, we have $n\leq 2g+1$.
\end{proof}
McCullough and Rajeevsarathy [MR] also proved by the similar arguments.

\section{Dehn twist expression of the root of degree 3}
We will give Dehn twist expression of the root of degree 3.
The key points are to use the \underline{star relation} which is given by Gervais [Ge].

Consider the torus with 3 boundary components $d_{1}$, $d_{2}$, $d_{3}$ and let $a_{1}$, $a_{2}$, $a_{3}$ and $b$ be simple closed curves in Figure~\ref{star}
\begin{figure}[htbp]
 \begin{center}
  \includegraphics*[width=3cm]{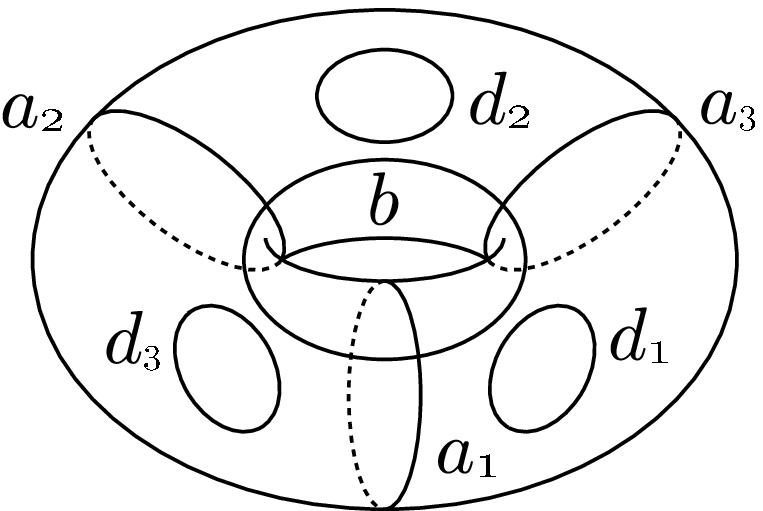}
 \end{center}
 \caption{The curves of star relation}
 \label{star}
\end{figure}

The \underline{star relation} is as follow:
\begin{eqnarray*}
(t_{a_{1}}t_{a_{2}}t_{a_{3}}b)^{3}=t_{d_{1}}t_{d_{2}}t_{d_{3}}
\end{eqnarray*}
If $a_{1}=a_{2}$, then $t_{d_{3}}$ is trivial, and the relation becomes
\begin{eqnarray*}
(t^{2}_{a_{1}}t_{a_{3}}b)^{3}=t_{d_{1}}t_{d_{2}}
\end{eqnarray*}

Let $\alpha_{i}$, $\alpha^{\prime}_{i}$, $\beta_{i}$ $(i=1,\ldots,g+1)$ and $\gamma$ be nonseparating simple closed curves and let $s_{j}$ $(j=1,\ldots,g-1)$ be separating simple closed curves in Figure~\ref{curves}.
\begin{figure}[htbp]
 \begin{center}
  \includegraphics*[width=10cm]{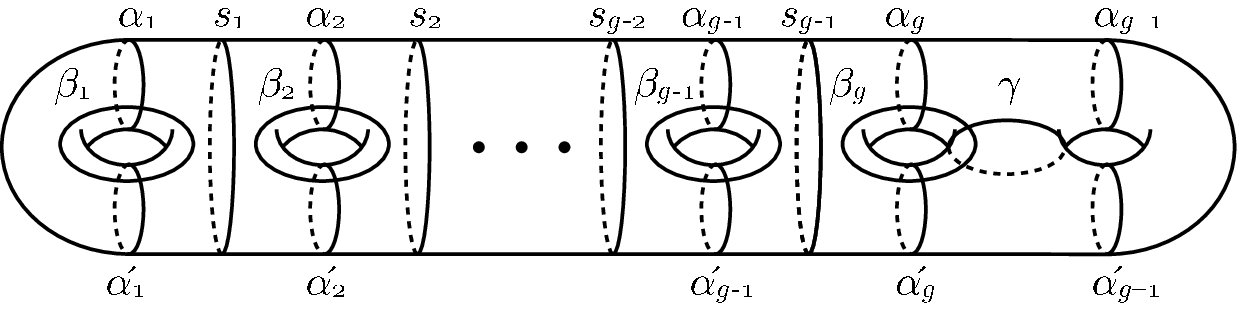}
 \end{center}
 \caption{The curves $\alpha_{i}$, $\alpha^{\prime}$, $\beta_{i}$ $(i=1,\ldots,g+1)$, $\gamma$, $s_{j}$ $(j=1,\ldots,g-1)$}
 \label{curves}
\end{figure}

We define
\begin{eqnarray*}
\rho_{1}&=&(t_{\alpha_{1}}t_{\beta_{1}})^{2}\\
\rho_{i}&=&t_{\alpha_{i}}^{2}t_{\alpha^{\prime}_{i}}t_{\beta_{i}} \ \ (i=2,\ldots,g-1)\\
\rho_{g}&=&t_{\alpha_{g}}t_{\gamma}t_{\alpha^{\prime}_{g}}t_{\beta_{g}}
\end{eqnarray*}
and 
\begin{eqnarray*}
\hat{h}&=&\rho_{g}\rho_{g-1}^{-1}\rho_{g-2}\cdots \rho_{3}^{-1}\rho_{2}\rho_{1}^{-1} \ \ \ (if \ g+1 \ is \ odd),\\
\hat{h}&=&\rho_{g}\rho_{g-1}^{-1}\rho_{g-2}\cdots \rho_{3}\rho_{2}^{-1}\rho_{1} \ \ \ (if \ g+1 \ is \ even).
\end{eqnarray*}
We note $\rho_{1}^{3}=t_{s_{1}}$.
Then, by star relation we have $\hat{h}^{3}=t_{\alpha_{g+1}}^{2}$.
When we define
\begin{eqnarray*}
h=t_{\alpha_{g+1}}\hat{h}^{-1},
\end{eqnarray*}
$h$ is the root of $t_{\alpha_{g+1}}$ of degree 3.

\section{A root of an image of the Dehn twist about a nonseparating curve to ${\rm Sp}(2(g+1),\mathbb{Z})$}
The action of ${\cal M}_{g+1}$ on ${\rm H}_{1}(\Sigma_{g+1};\mathbb{Z})$ preserves the algebraic intersection forms, so it induces a representation $\phi:{\cal M}_{g+1}\rightarrow {\rm Sp}(2(g+1),\mathbb{Z})$, which is well-known to be surjective.
In this section we will prove that if $C$ is nonseparating curve, then an image of Dehn twist $t_{C}$ has no roots of even degree.
We prove the case of $g+1=2$.
An element $2\times 2$ matrix $A\in{\rm Sp}(4,\mathbb{Z})$ satisfies that $AJ{}^{t}\!{A}=J$, where ${}^{t}\!{A}$ is transpose of $A$ and $J$ is
\begin{eqnarray*}
J=\left(
\begin{array}{cccc}
0& 0& 1& 0\\
0& 0& 0& 1\\
-1& 0& 0& 0\\
0& -1& 0& 0
\end{array}
\right).
\end{eqnarray*}
Let $\alpha_{1}$ be a nonseparating simple closed curve in Figure~\ref{curves} and let $S$ be 
\begin{eqnarray*}
S=\rho(t_{\alpha_{1}})=\left(
\begin{array}{cccc}
1& 0& 0& 0\\
0& 1& 0& 0\\
1& 0& 1& 0\\
0& 0& 0& 1
\end{array}
\right),
\ \
(S^{-1}=\left(
\begin{array}{cccc}
1& 0& 0& 0\\
0& 1& 0& 0\\
-1& 0& 1& 0\\
0& 0& 0& 1
\end{array}
\right)).
\end{eqnarray*}
We assume that $S=A^{2}$, where $A(\in {\rm Sp}(2(g+1),\mathbb{Z}))$ is
\begin{eqnarray*}
A=\left(
\begin{array}{cccc}
a_{11}& a_{12}& a_{13}& a_{14}\\
a_{21}& a_{22}& a_{23}& a_{24}\\
a_{31}& a_{32}& a_{33}& a_{34}\\
a_{41}& a_{42}& a_{43}& a_{44}
\end{array}
\right).
\end{eqnarray*}
Since $A^{-1}=S^{-1}A$, we have ${}^{t}\!{A}J=JA^{-1}=JS^{-1}A$.
By
\begin{eqnarray*}
{}^{t}\!{A}J=\left(
\begin{array}{cccc}
-a_{31}& -a_{41}& a_{11}& a_{21}\\
-a_{32}& -a_{42}& a_{12}& a_{22}\\
-a_{33}& -a_{43}& a_{13}& a_{23}\\
-a_{34}& -a_{44}& a_{14}& a_{24}
\end{array}
\right)
\end{eqnarray*}
and
\begin{eqnarray*}
JS^{-1}A=\left(
\begin{array}{cccc}
-a_{11}+a_{31}& -a_{12}+a_{32}& -a_{13}+a_{33}& -a_{14}+a_{34}\\
a_{41}& a_{42}& a_{43}& a_{44}\\
-a_{11}& -a_{12}& -a_{13}& -a_{14}\\
-a_{21}& -a_{22}& -a_{23}& -a_{24}
\end{array}
\right)
\end{eqnarray*}
, we have
\begin{eqnarray}\label{m1}
a_{11}/2=a_{31}.
\end{eqnarray}
Since $SA=AS$, we have
\begin{eqnarray*}
\left(
\begin{array}{cccc}
a_{11}& a_{12}& a_{13}& a_{14}\\
a_{21}& a_{22}& a_{23}& a_{24}\\
a_{11}+a_{31}& a_{12}+a_{32}& a_{13}+a_{33}& a_{14}+a_{34}\\
a_{41}& a_{42}& a_{43}& a_{44}
\end{array}
\right)
=\left(
\begin{array}{cccc}
a_{11}+a_{13}& a_{12}& a_{13}& a_{14}\\
a_{21}+a_{23}& a_{22}& a_{23}& a_{24}\\
a_{31}+a_{33}& a_{32}& a_{33}& a_{34}\\
a_{41}+a_{43}& a_{42}& a_{43}& a_{44}
\end{array}
\right)
\end{eqnarray*}
We have $a_{12}=a_{13}=a_{14}=0$.
By $S=A^{2}$, we have 
\begin{eqnarray}\label{m2}
a_{11}=\pm 1.
\end{eqnarray}
By equations (\ref{m1}) and (\ref{m2}) we have that $a_{31}=a_{11}/2=\pm 1/2$.
This is in contradiction to $A\in {\rm Sp}(4,\mathbb{Z})$.
For $g>2$, we can prove by the same arguments.

Since $S=\rho(t_{C})$ has no roots of degree 2, we can find that a root of $t_{C}$ cannot have even degree.

\section{Remarks}
We will consider a root of the Dehn twist about a nonseparating curve in the mapping class group of a surface with boundary components and punctures.

Let $\Sigma_{g,b_{1},p}^{b_{2}}$ be a compact oriented surface with $b_{1}+b_{2}$ boundary components $\partial_{1},\ldots,\partial_{b_{1}}$, $\partial^{\prime}_{1},\ldots,\partial^{\prime}_{b_{2}}$ and $p$ punctures.
We denote ${\cal M}_{g,b_{1},p}^{b_{2}}$ the mapping class group of $\Sigma_{g,b_{1},p}^{b_{2}}$, the group of isotopy classes of orientation-preserving homeomorphisms which are allowed to permute $p$ punctures and the points on $\partial_{1},\ldots,\partial_{b_{1}}$, and restrict to the identity on $\partial^{\prime}_{1},\ldots,\partial^{\prime}_{b_{2}}$.
The isotopies are also required to permute $p$ punctures and the points on $\partial_{1},\ldots,\partial_{b_{1}}$ and to fix the points on $\partial^{\prime}_{1},\ldots,\partial^{\prime}_{b_{2}}$.
Let $t_{C}$ be the Dehn twist about a nonseparating curve $C$ in $\Sigma_{g+1,b_{1},p}^{b_{2}}$ $(g+1\geq 2)$.
In this section we do not distinguish a homeomorphism/curve and its isotopy class.

We assume that $t_{C}=h^{n}\in {\cal M}_{g+1,b_{1},p}^{b_{2}}$.
Since $id|_{\Sigma_{g+1,b_{1},p}^{b_{2}}-C}=t_{C}|_{\Sigma_{g+1,b_{1},p}^{b_{2}}-C}=h^{n}|_{\Sigma_{g+1,b_{1},p}^{b_{2}}-C}$, we see that $h|_{\Sigma_{g+1,b_{1},p}^{b_{2}}-C}$ is a periodic map in ${\cal M}_{g,b_{1}+2,p}^{b_{2}}$.
However, if $b_{2}>0$ then ${\cal M}_{g,b_{1}+2,p}^{b_{2}}$ is torsion free.
This is in contradiction to what $h|_{\Sigma_{g+1,b_{1},p}^{b_{2}}-C}$ is a periodic map in ${\cal M}_{g,b_{1}+2,p}^{b_{2}}$.
Therefore, we see that if $b_{2}>0$, $t_{C}$ has no roots in ${\cal M}_{g+1,b_{1},p}^{b_{2}}$.

We assume that $b_{2}=0$ and omit $b_{2}$ from the notation.

Let $h(\in {\cal M}_{g+1})$ be a root of $t_{C}(\in {\cal M}_{g+1})$ of degree $2g+1$.
By Main Theorem A and the proof of Corollary C, we see that $h|_{\Sigma_{g+1}-C}=h|_{\Sigma_{g,2,0}}$ is a periodic map of order $2g+1$ such that $h|_{\Sigma_{g,2,0}}$ maps $\partial_{\nu}\Sigma_{g,2,0}$ to $\partial_{\nu}\Sigma_{g,2,0}$ $(\nu=0,1)$ and has only one fixed point.
Therefore, if $b_{1}+p\equiv r \pmod{2g+1}$ and $1<r<2g+1$ then there is no $\mathbb{Z}_{2g+1}$-action on $\Sigma_{g,b_{1}+2,p}$.
It means that $t_{C}\in {\cal M}_{g+1,b_{1},p}$ has no roots of degree $2g+1$.
Therefore, we see that if $b_{1}+p\equiv r\pmod{2g+1}$ and $1<r<2g+1$, $t_{C}\in {\cal M}_{g+1,b_{1},p}$ has no roots of degree $2g+1$.
In particular, if $g=1$ and $b_{1}+p\equiv 2\pmod{3}$ then $t_{C}\in {\cal M}_{2,b_{1},p}$ has no roots.

\ \\
Department of Mathematics, Graduate School of Science, Osaka University, Toyonaka, Osaka 560-0043, Japan\\
\textit{E-mail address:} \bf{n-monden@cr.math.sci.osaka-u.ac.jp}
\end{document}